\gappto{\UrlBreaks}{\UrlOrds}
\newcommand{\R}{\mathbb{R}}
\newcommand{\Z}{\mathbb{Z}}
\theoremstyle{plain}
\newtheorem{theorem}{Theorem}
\newtheorem{proposition}[theorem]{Proposition}
\theoremstyle{definition}
\newtheorem{definition}[theorem]{Definition}
\newtheorem{example}[theorem]{Example}
\begin{document}

\title{A Widespread Error in the Use of Benford's Law to Detect Election and Other Fraud}
\author{Theodore P.\ Hill}
\date{\vspace{-5ex}}  
\maketitle

\begin{abstract}

The goal of this note is to show that a widespread claim about Benford's Law, namely, that the range of every 
Benford distribution spans at least several orders of magnitude, is false. The proof is constructive and concrete examples are presented.
\end{abstract}

\section{{Introduction}}\label{sec1}

Errors often occur in mathematics, and sometimes these errors pass peer review and are published. The standard scientific method of handling such mistakes is for the author or someone else to publish corrections. Sometimes mathematical errors even make it into the general press, and when they do they can be difficult to correct. 

On rare occasions mathematical errors make it up to a still higher level of public dissemination than the standard publishing or social media outlets. When such an error is propagated not only by viral YouTube lectures but also by established  formal fact-finding teams of international news agencies and top universities alike, it becomes what this author calls a \textit{mega-error}. Mega-errors spread instantaneously with the Internet, and can be very misleading, even harmful, and exceedingly difficult to correct.

The subject of this short note addresses one such occurrence with respect to the well-known nineteenth century statistical phenomenon called Benford's Law (BL) (cf.\ \cite{BerAH15}, \cite{BerAH17}, \cite{BerAH20}, \cite{BerAHR09}, and \cite{MilS15}).  

In  \cite{BerAH10} and \cite{BerAH11B},  it was shown that this 50-year-old  widely cited claim of Feller is false:

\begin{tabbing}
\textbf{(A)} \= If a distribution is smooth and spread out, it will be close to Benford.
\end{tabbing}
\noindent

A partial converse of statement \textbf{(A)} is the assertion 
\begin{tabbing}
\textbf{(B)} \= To follow Benford's Law, a distribution must span at least several orders of magnitude. 
\end{tabbing}

As will be shown below, statement \textbf{(B)} is also false.  However, this mistake has recently been published and re-published by a number of highly-respected, high-level, and extremely widely read and cited sources including the following. \\ \\
\noindent
In the international mathematics context, a Wolfram Research mathematics blog claims
\begin{quote}
\noindent
``To observe the validity of Benford's law for a dataset, the scale of the data must extend over several orders of magnitude" \cite{TroZ10}.
\end{quote} 
 
 \noindent
 The academic and digital research coalition Election Integrity Partnership (a joint watchdog agency with Stanford University, the University of Washington, and others) claims that to follow Benford's Law,  
 \begin{quote}
 \noindent
``the numbers must span multiple orders of magnitude...Violations of these assumptions lead to violations of the law." \cite{BakWSSW20}.
\end{quote}

\noindent
This last statement by Election Integrity Partnership was picked up and repeated verbatim by the  \textit{Reuters News Agency} Fact Check Team, 

\begin{quote}
\noindent
``for the law to hold \ldots the numbers must span multiple orders of magnitude"  \cite{ReuS20}.
 \end{quote}
 
\noindent
That same day, a viral (over 1.7 million views) YouTube lecture by former maths teacher Matt Parker claimed
\begin{quote}
\noindent
``you only get Benford's Law in some situations and one of the main requirements is that the data spans several orders of magnitude" \cite{ParM20}.
 \end{quote} 
 
Thus, in this author's opinion, \textbf{(B)} can be characterized as a mega-error.  The purpose  of this note is to prove 
a general proposition that shows that claim \textbf{(B)} and the essentially equivalent news and social media claims above are false. To be Benford, a distribution or dataset only needs to span one order of magnitude; in fact a much stronger conclusion will be proved.

\section{{\bf Basic notation and definitions}}\label{sec2}

Throughout this note, the emphasis will be on decimal representations of numbers, the classical setting of Benford's law, so here $\log t$ means $\log_{10} t$, etc.  (For other bases such as binary or hexadecimal, the analogous results hold with very little change, simply replacing $\log = \log_{10}$ by $\log_b$ for the appropriate base $b$.  The interested reader is referred to \cite[p.~9]{BerAH15} for details.)

The basic notion underlying Benford's law concerns the {\em leading significant digits}  and, more generally, the {\em significand} of  a number (also sometimes referred to in scientific notation as the \textit{mantissa}). 

\begin{definition}\label{def21} 
For $x \in \R^+$, the (decimal) {\em significand of x}, $S(x)$, is given by $S(x) = t$, where $t$ is the unique number in $[1, 10)$ with $x = 10^k t$ for some (necessarily unique) $k \in \Z$. For negative $x$, $S(x) = S(-x)$, and for convenience, $S(0)=0$.
\end{definition}

\begin{example}\label{ex22}
$S(2019) = 2.019 = S(0.02019) =S(-20.19) $.
\end{example}

\begin{definition}\label{def23}
{\em The first (decimal) significant digit of a number} $x$, denoted by $D{_1}(x)$, is the first (left-most) digit of $S(x)$.  Similarly, $D{_2}(x)$ denotes the second digit of $S(x)$,  $D_3(x)$ the third digit, etc.
\end{definition}

\begin{example}\label{ex24}
$D{_1}(2019) = D{_1}(0.0219) = D{_1}(-20.19) = 2, D{_2}(2019) = 0, D{_3}(2019) = 1, D{_4}(2019) = 9$, and $D{_k}(2019) = 0$ for all $k > 4$.
\end{example}

\begin{definition}\label{def25}
A random variable $X$ is {\em Benford} if 
$$
P(S(|X|) \leq t) = \log t \quad \mbox{\rm for all } t \in [1, 10).
$$
\end{definition}
For real numbers $a < b$, let $U[a,b]$ denote a random variable that is uniformly distributed on $[a,b]$, i.e., 
$P(U[a,b] \leq x) = 0$ for $x<a$, = $(x-a)/(b-a)$ for $x \in [a,b]$, and $ = 1$ for $x>b$.

\begin{example}
\label{ex25}
\cite[Example 3.6(i)]{BerAH15} $X=10^{U[0,1]}$ is Benford. 
\end{example}
 
\begin{example}
In the special case of first significant digits, Definition \ref{def25} yields  the well-known {\em first-digit law}: For every Benford random variable $X$, 

$$
P(D_1(X) = d) = 
log(1 + \frac{1}{d})
 \quad \mbox{\rm for all } d \in \{1,2, \ldots, 9\}.
$$
\end{example}
\noindent
N.B.  None of the classical probability distributions are exactly Benford, although some are close for certain values of their parameters.  For example, no uniform, exponential, normal, or Pareto random variables are exactly Benford, although Pareto and log normal random variables, among others, can be arbitrarily close to Benford depending on the values of their parameters.

\section{Main Proposition}
The following proposition, the main result in this note, shows that both \textbf{(B)} and its converse are false, in a very strong sense. 

\begin{proposition} \label{prop3.1}
Fix $b>a>0$. 

\begin{enumerate}[label=(\roman*)]
\item If $b<10a$, no random variable with range contained in $[a,b]$ is Benford;
\item If $b=10a$, there is exactly one Benford distribution with range in $[a,b]$;
\item If $b>10a>0$, then for each $c \in (0,0.1b- a))$,
$X_c = (a+c)10^{U[0,1]}$ has range contained in $[a,b]$ and is  Benford; and
 for each $c \in (0, b-10a)$, $Y_c = U[a+c, 10a+c]$ has range contained in $[a,b]$ and is not Benford.
\end{enumerate}
\end{proposition}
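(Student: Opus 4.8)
The plan is to recast Definition~\ref{def25} in the form most convenient here: a positive random variable $X$ is Benford if and only if the fractional part $\langle\log X\rangle:=\log X-\lfloor\log X\rfloor$ of its (base-ten) logarithm is uniformly distributed on $[0,1)$. This equivalence is immediate, since $S(X)\le t$ iff $\langle\log X\rangle\le\log t$, and $\log t$ runs over $[0,1)$ as $t$ runs over $[1,10)$; in this language, for instance, Example~\ref{ex25} becomes the triviality that $\langle\log 10^{U[0,1]}\rangle$ is uniform on $[0,1)$. Since $a>0$, every random variable in the proposition is positive, so $S(|X|)=S(X)$ and the reformulation applies directly; all three parts then reduce to the translation and rotation behaviour of the uniform distribution modulo one.

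\emph{Part (i).} If $X$ has range in $[a,b]$ with $b<10a$, then $\log X$ takes values in the interval $I:=[\log a,\log b]$, of length $\log(b/a)<1$. Splitting $I$ into the pieces $I\cap[n,n+1)$, $n\in\Z$, on each of which $t\mapsto\langle t\rangle$ is the translation $t\mapsto t-n$, one sees that $\langle\log X\rangle$ lies almost surely in the set $A:=\{\langle t\rangle:t\in I\}\subseteq[0,1)$, a finite union of intervals whose total Lebesgue measure is at most $\log(b/a)<1$. A $U[0,1]$ variable assigns $A$ probability equal to that measure, which is less than $1$, whereas $\langle\log X\rangle\in A$ almost surely; hence $\langle\log X\rangle$ is not uniform on $[0,1)$, and $X$ is not Benford.

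\emph{Part (ii).} For existence, take $X=a\cdot 10^{U[0,1]}$: its range is $[a,10a]$, and since $\log X=\log a+U[0,1]$, translation-invariance of the uniform law modulo one makes $\langle\log X\rangle$ uniform on $[0,1)$, so $X$ is Benford. For uniqueness, let $X$ be any Benford variable with range in $[a,10a]$. Then $\log X$ lies in the length-one window $[\log a,\log a+1]$; writing $\log X=\log a+V$ with $V\in[0,1]$ and $f:=\langle\log a\rangle$ gives $\langle\log X\rangle=\langle f+V\rangle=:\psi(V)$, where $\psi$ restricted to $[0,1)$ is the rotation of the circle by $f$, a measure-preserving bijection with measure-preserving inverse. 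A uniform variable has no atoms, and $\{X=10a\}\subseteq\{\langle\log X\rangle=f\}$, so $P(X=10a)=0$ and hence $V\in[0,1)$ almost surely; therefore $V=\psi^{-1}(\langle\log X\rangle)$ is uniform on $[0,1)$, so $\log X$ is uniform on $[\log a,\log a+1)$, so $X$ is distributed as $a\cdot 10^{U[0,1]}$. I expect this uniqueness step to be the main obstacle: one must track the wrap-around of $\psi$ and carefully rule out atoms at the window endpoints $a$ and $10a$.

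\emph{Part (iii).} The parameter ranges $(0,\,0.1b-a)$ and $(0,\,b-10a)$ are nonempty precisely because $b>10a$. The range inclusions are routine: $X_c=(a+c)10^{U[0,1]}$ has range $[a+c,\,10(a+c)]$, and $c<0.1b-a$ forces $10(a+c)<b$; while $Y_c=U[a+c,10a+c]$ has range $[a+c,\,10a+c]$, and $c<b-10a$ forces $10a+c<b$. That $X_c$ is Benford follows exactly as in the existence half of (ii), applying translation-invariance to $\langle\log(a+c)+U[0,1]\rangle$. Finally, $Y_c$ is \emph{not} Benford: since $c>0$ we have $10a+c<10(a+c)$, so the range $[a+c,\,10a+c]$ of $Y_c$ is an interval whose right endpoint is less than ten times its left endpoint, and part (i), applied with $a+c$ and $10a+c$ in place of $a$ and $b$, shows that no random variable with that range, $Y_c$ included, can be Benford.
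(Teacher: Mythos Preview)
Your proof is correct and, in overall architecture, matches the paper: both arguments rest on the standard equivalence that $X$ is Benford iff $\langle\log X\rangle$ is uniform on $[0,1)$, and both deduce (ii) from this and handle the Benford half of (iii) by scale/translation invariance.

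There are two genuine differences worth noting. First, for (i) the paper only sketches that ``certain sequences of significant digits'' are missing from the range, whereas you give a clean measure-theoretic argument that $\langle\log X\rangle$ is supported on a set of Lebesgue measure strictly less than $1$; your version is more rigorous but the idea is the same. Second, and more interestingly, for the non-Benford half of (iii) the paper invokes an external result (\cite[Proposition~1]{BerAH11B}) that \emph{no} uniform distribution is Benford, while you instead observe that since $c>0$ the range $[a+c,10a+c]$ of $Y_c$ spans strictly less than one order of magnitude and then simply apply your part~(i). Your route is more self-contained and does not use the uniformity of $Y_c$ at all; the paper's route, by contrast, would cover $Y_c$'s with arbitrarily long ranges but at the cost of citing a separate theorem. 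Your uniqueness argument in (ii) is also spelled out in more detail than the paper's, carefully disposing of the potential atom at the endpoint $10a$ before inverting the rotation; this is exactly the point one has to get right, and you do.
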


\begin{proof}

For (i), let $X$ be a distribution with range in $[a,b]$, where $b<10a$.  Then it is easy to see that there are certain sequences of significant digits that the range of $X$ does not include, which violates Definition \ref{def25}.  

Conclusion (ii) follows easily since a random variable $X$ is Benford if and only if its logarithm is uniformly distributed mod 1 \cite[Theorem 4.2]{BerAH15}, which implies that if $X$ has range in $[a, 10a]$, it necessarily has the distribution $X=a10^{U[0,1]}$. 

To see (iii)  first note that by Example \ref{ex25}, $10^{U[0,1]}$ is Benford.  Since Benford's Law is scale-invariant \cite[Theorem 5.3]{BerAH15} this implies that $(a+c)10^{U[0,1]}$ is a Benford random variable.  For the second conclusion in (iii), note that by \cite[Proposition 1]{BerAH11B}, all uniform distributions are bounded strictly away from Benford's Law; in particular no uniformly distributed random variable is Benford.  
\end{proof}

Thus for every range of values spanning one order of magnitude, there is a Benford random variable with that range, and if the range is more than one order of magnitude, even only infinitesimally more, there will be infinitely many Benford and also infinitely many non-Benford distributions with that range.  

\begin{example}
\label{ex3.1}
\begin{enumerate}[label=(\roman*)]
\item There are no Benford random variables (datasets, distributions) with values only between 100 and 999, or between 73 and 729.99. 
\item There is exactly one Benford distribution, namely $X=100(10^{U[0,1]})$ with range in [100, 1000], and only one,
$X=73(10^{U[0,1]})$, with range in [73, 730].
\item There are infinitely many Benford distributions and infinitely many non-Benford distributions with values in $[100, 1000.0001]$ and also in the range $[73, 730.01]$. 
\end{enumerate} 
\end{example}

The Benford random variables $X_c$ in Proposition \ref{prop3.1}(iii) all have regular and continuous densities, but there are many Benford distributions which do not have this regular form; the next example exhibits one such distribution.
 
\begin{example}
\label{pupExample}
\cite[Example 3.6 (iii)]{BerAH15} The absolutely continuous random variable $X$ with density
$$
f_X(x) = \left\{ \begin{array}{ll}
x^{-2} (x-1)\log e  & \mbox{\rm if } x \in [1,10) \, ,\\[1mm]
10x^{-2}\log e   & \mbox{\rm if } x \in [10,100) \, ,\\[1mm]
0 & \mbox{otherwise} \, ,
\end{array} \right.
$$
is Benford, since the random variable $S(X)$ is also absolutely continuous, with density
$t^{-1}\log e$ for $t \in [1, 10)$,  even though $X$ is not of the
type of distribution considered in Proposition \ref{prop3.1}(i) above.
\end{example}

\vspace{1em}
\noindent
Note:  For a comprehensive treatment of BL at the level of advanced mathematics (measure theory, complex variables, functional analysis, etc.) the reader is referred to \cite{BerAH15}, and for a recent overview of BL at the level of beginning calculus and probability (without proofs), see \cite{BerAH20}.  
 

\end{document}